\documentclass[11pt]{amsart}
\usepackage{amsmath,amssymb,amsthm}
\usepackage[colorlinks=true,linkcolor=blue,citecolor=blue,urlcolor=blue]{hyperref}

\newtheorem{theorem}{Theorem}[section]

\newtheorem{proposition}[theorem]{Proposition}
\newtheorem{corollary}[theorem]{Corollary}
\theoremstyle{definition}
\newtheorem{definition}[theorem]{Definition}
\newtheorem{remark}[theorem]{Remark}

\newtheorem{question}[theorem]{Question}
\newtheorem{conjecture}[theorem]{Conjecture}

\newcommand{\B}{B}
\newcommand{\N}{\mathbb N}
\newcommand{\R}{\mathbb R}
\newcommand{\pl}{\operatorname{pl}}
\newcommand{\CanPlat}{\operatorname{CanPlat}}
\newcommand{\minD}{\operatorname{min}_{<_D}}

\title{A Dehornoy-Type Ordering on Plat Presentation Classes}
\author{Makoto Ozawa}
\address{Department of Natural Sciences, Faculty of Arts and Sciences, Komazawa University, 1-23-1 Komazawa, Setagaya-ku, Tokyo, 154-8525, Japan}
\email{w3c@komazawa-u.ac.jp}
\date{\today}
\keywords{braid group, Dehornoy order, plat presentation, bridge position, bridge isotopy, Hilden subgroup, double coset}

\begin{document}

\begin{abstract}
For each integer $n\ge 1$, after fixing a proper complexity function on the braid group $\B_{2n}$, we use the Dehornoy order to define a strict total order on the set
\[
\mathcal P_{2n}=H_{2n}\backslash \B_{2n}/H_{2n}
\]
of $2n$--plat presentation classes. For a link type $\mathcal L$ with bridge number $b(\mathcal L)\le n$, this induces a strict total order on the subset $\mathcal P^{(n)}(\mathcal L)$ corresponding to bridge isotopy classes of $n$--bridge positions of $\mathcal L$. We also define a distinguished class $\CanPlat_D^{(n)}(\mathcal L)$ and show that the globally chosen Dehornoy canonical braid agrees with the cosetwise canonical representative of the associated Hilden double coset. As an application, we reformulate the fixed-level bridge finiteness conjecture in terms of boundedness of canonical representatives. This viewpoint supports the role of bridge positions as a structured finite-level model for studying the otherwise vast collection of geometric positions of a link.
\end{abstract}

\maketitle

\section{Introduction}

A link in $S^3$ can often be studied through a $2n$--plat presentation, that is, as the plat closure of a braid on $2n$ strands. On the braid side, the braid group $\B_{2n}$ carries the Dehornoy order $<_D$, a natural total order introduced by Dehornoy \cite{Dehornoy}. On the plat side, Hovland's theorem shows that two braids determine the same bridge position precisely when they lie in the same Hilden double coset
\[
H_{2n}\backslash \B_{2n}/H_{2n}
\]
(see \cite{Hovland}). Thus plat presentation classes are naturally encoded by Hilden double cosets. The difficulty is that, although $\B_{2n}$ itself is totally ordered by $<_D$, this order does not descend directly to the double coset space.

The main idea of this paper is to choose a canonical representative in each Hilden double coset. To do this, we fix a proper complexity function
\[
c_n\colon \B_{2n}\to \mathbb N
\]
at a fixed bridge level $n$. For a double coset $C\in \mathcal P_{2n}=H_{2n}\backslash \B_{2n}/H_{2n}$, we first consider the subset of elements of minimal $c_n$--complexity. Since $c_n$ is proper, this subset is finite and nonempty. We then define $r_{D,n}(C)$ to be the $<_D$--least element of this set. Comparing these canonical representatives yields a strict total order $\prec_{D,n}$ on $\mathcal P_{2n}$.

We then apply this construction to a fixed link type $\mathcal L$. For each integer $n\ge b(\mathcal L)$, let
\[
\mathcal P^{(n)}(\mathcal L)\subset \mathcal P_{2n}
\]
denote the subset corresponding to bridge isotopy classes of $n$--bridge positions of $\mathcal L$. Restricting $\prec_{D,n}$ gives a strict total order on $\mathcal P^{(n)}(\mathcal L)$, and also determines a distinguished class
\[
\CanPlat_D^{(n)}(\mathcal L).
\]
A useful compatibility result shows that the globally defined canonical braid agrees with the cosetwise canonical representative of the resulting Hilden double coset. This gives an algebraic approach to the fixed-level bridge finiteness conjecture, which asks whether for each integer $n$ every link type admits only finitely many bridge isotopy classes of $n$--bridge positions \cite{JKOT}. Our reformulation shows that this finiteness problem is equivalent to a boundedness problem for the complexities of canonical representatives. The minimal bridge level is recovered as the special case $n=b(\mathcal L)$.

A given link type admits infinitely many geometric realizations in $S^3$, so the collection of all positions is too large to organize directly. Bridge positions provide a more rigid framework. After fixing a bridge level $n$ and passing to bridge isotopy classes, one expects a much more structured and potentially finite object. This expectation is formalized by the fixed-level bridge finiteness conjecture, and the point of view developed in this paper highlights the usefulness of bridge positions as a finite-level organizing principle in knot theory.

Here it is important to distinguish bridge positions from bridge decompositions. Following \cite{JKOT}, bridge positions are considered up to bridge isotopy, whereas bridge decompositions and bridge spheres are finer objects. Since a single bridge position may admit infinitely many non-isotopic bridge decompositions, for example by twisting along an essential torus \cite{Jang2010}, the finiteness conjecture considered here is formulated for bridge positions up to bridge isotopy.

The paper is organized as follows. In Section~2, we recall the notions of bridge position, bridge isotopy, bridge decomposition, and bridge sphere, following \cite{JKOT}, and explain how Hilden double cosets encode bridge positions via plat presentations, following \cite{Hovland}. In Section~3, we define the Dehornoy-induced order on $\mathcal P_{2n}$. In Section~4, we restrict this order to a fixed link type, define the distinguished class $\CanPlat_D^{(n)}(\mathcal L)$, and prove the compatibility of the global and cosetwise canonical constructions. In Section~5, we reformulate the fixed-level bridge finiteness conjecture in terms of boundedness of canonical representatives. In Section~6, we discuss the minimal bridge level as a special case, and in Section~7 we give examples and questions.

\section{Bridge positions, bridge decompositions, and plat classes}

Throughout the paper, the ambient space is $S^3$. We fix a standard height function
$h\colon S^3\to \R$
with exactly two critical points.

\subsection{Bridge positions and bridge isotopy}

In this subsection, we follow \cite{JKOT}. The definitions of bridge position and bridge isotopy are taken from that source in the form needed here.

\begin{definition}
Let $\mathcal L$ be a link type in $S^3$, and let $n$ be a positive integer. An \emph{$n$--bridge position} of $\mathcal L$ is a link $L\in \mathcal L$ such that:
\begin{itemize}
    \item the function $h|_L$ has exactly $2n$ critical points,
    \item all these critical points are non-degenerate, and
    \item every local maximum value of $h|_L$ is greater than every local minimum value of $h|_L$.
\end{itemize}
By a \emph{bridge position} of $\mathcal L$ we mean an $m$--bridge position of $\mathcal L$ for some positive integer $m$.
\end{definition}

\begin{definition}
Let $L_0$ and $L_1$ be bridge positions of the same link type $\mathcal L$. We say that $L_0$ and $L_1$ are \emph{bridge isotopic} if there exists an ambient isotopy
$\{H_t\colon S^3\to S^3\}_{t\in[0,1]}$
such that $H_0=\mathrm{id}$, $H_1(L_0)=L_1$, and $H_t(L_0)$ is a bridge position of $\mathcal L$ for every $t\in[0,1]$.
\end{definition}

\subsection{Bridge decompositions and bridge spheres}

In this subsection, we again follow \cite{JKOT}. We record the bridge decomposition viewpoint separately, since it is distinct from bridge position and will be used only for comparison.

\begin{definition}
Let $\mathcal L$ be a link type in $S^3$, let $L\in \mathcal L$, and let $n$ be a positive integer. An \emph{$n$--bridge decomposition} of $L$ is a pair $(B^-,B^+)$ of $3$--balls such that:
\begin{itemize}
    \item $B^-\cup B^+=S^3$ and $B^-\cap B^+=\partial B^-=\partial B^+$,
    \item the $2$--sphere $P=B^-\cap B^+$ intersects $L$ transversely, and
    \item for each $\varepsilon\in\{-,+\}$, the tangle $L\cap B^{\varepsilon}$ consists of $n$ arcs simultaneously parallel to $\partial B^{\varepsilon}$.
\end{itemize}
The sphere $P$ is called an \emph{$n$--bridge sphere} of $L$. We also say that $(L,B^-,B^+)$ is an $n$--bridge decomposition of $\mathcal L$, and that $(L,P)$ is an $n$--bridge sphere of $\mathcal L$. By a \emph{bridge decomposition} (respectively, \emph{bridge sphere}) we mean an $m$--bridge decomposition (respectively, $m$--bridge sphere) for some positive integer $m$.
\end{definition}

\begin{definition}
Let $(L_0,B^-_0,B^+_0)$ and $(L_1,B^-_1,B^+_1)$ be bridge decompositions of the same link type $\mathcal L$. We say that they are \emph{diffeomorphic} if there exists an orientation-preserving diffeomorphism
$H\colon S^3\to S^3$
such that $H(L_0)=L_1$ and $H(B^-_0)=B^-_1$. Likewise, two bridge spheres $(L_0,P_0)$ and $(L_1,P_1)$ of $\mathcal L$ are said to be \emph{diffeomorphic} if there exists an orientation-preserving diffeomorphism
$G\colon S^3\to S^3$
such that $G(L_0)=L_1$ and $G(P_0)=P_1$.
\end{definition}

\begin{definition}
Let $(B^-_0,B^+_0)$ and $(B^-_1,B^+_1)$ be bridge decompositions of the same link $L$. We say that they are \emph{bridge isotopic as bridge decompositions} if there exists an ambient isotopy
$\{H_t\colon S^3\to S^3\}_{t\in[0,1]}$
such that $H_0=\mathrm{id}$, $H_1(B^-_0)=B^-_1$, and $(H_t(B^-_0),H_t(B^+_0))$ is a bridge decomposition of $L$ for every $t\in[0,1]$.

Likewise, two bridge spheres $P_0$ and $P_1$ of the same link $L$ are said to be \emph{bridge isotopic as bridge spheres} if there exists an ambient isotopy
$\{G_t\colon S^3\to S^3\}_{t\in[0,1]}$
such that $G_0=\mathrm{id}$, $G_1(P_0)=P_1$, and $G_t(P_0)$ is a bridge sphere of $L$ for every $t\in[0,1]$.
\end{definition}

\begin{remark}\label{rem:position-vs-decomposition}
Bridge decompositions, bridge spheres, and bridge positions should not be conflated. A single bridge position may admit infinitely many non-bridge-isotopic bridge decompositions, for example by twisting along an essential torus; see Jang \cite{Jang2010}. Even though the underlying bridge position remains unchanged, the associated bridge decompositions may vary infinitely. Thus the fixed-level finiteness problem in \cite{JKOT} is a conjecture about bridge positions up to bridge isotopy, rather than about bridge decompositions or bridge spheres.
\end{remark}

\subsection{Plat presentations and Hilden double cosets}

Let $\B_{2n}$ be the braid group on $2n$ strands. For $\beta\in \B_{2n}$, let $\pl(\beta)$ denote the plat closure of $\beta$, obtained by joining the top endpoints and the bottom endpoints in adjacent pairs. Let $\tau_n\subset B^3$ denote the standard trivial $n$--string tangle determined by these adjacent pairings. We write $H_{2n}\le \B_{2n}$ for the Hilden subgroup, that is, the subgroup of braids whose boundary action on the $2n$ marked points extends to a homeomorphism of the pair $(B^3,\tau_n)$. Equivalently, $H_{2n}$ is the subgroup preserving the standard cap system. We then write
\[
\mathcal P_{2n}:=H_{2n}\backslash \B_{2n}/H_{2n}.
\]
For $\beta\in \B_{2n}$, we write
\[
[\beta]_H:=H_{2n}\beta H_{2n}\in \mathcal P_{2n}.
\]

\begin{proposition}\label{prop:same-coset-same-plat}
If $\beta,\beta'\in \B_{2n}$ lie in the same Hilden double coset, then the plat closures $\pl(\beta)$ and $\pl(\beta')$ determine bridge-isotopic $n$--bridge positions.
\end{proposition}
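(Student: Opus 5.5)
By the definition of the double coset, $\beta'=h_1\beta h_2$ for some $h_1,h_2\in H_{2n}$. Since an arbitrary element of $H_{2n}$ is a product of generators and their inverses, it suffices to show two things. First, $\pl(\beta)$ and $\pl(h\beta)$ are bridge isotopic when $h$ is a single element of $H_{2n}$. Second, the same holds for $\pl(\beta)$ and $\pl(\beta h)$. Since bridge isotopy is an equivalence relation (concatenate isotopies), these two facts give the proposition.

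I would realize $\pl(\beta)$ concretely in $S^3=B^+\cup (S^2\times[0,1])\cup B^-$, with the height function $h$ increasing along the $[0,1]$ factor. The braid $\beta$ sits monotonically in the product region $S^2\times[0,1]$. The top caps form the trivial tangle $\tau_n$ in $B^+$, each cap contributing exactly one maximum. The bottom caps likewise form $\tau_n$ in $B^-$, contributing one minimum each. This is an $n$--bridge position: there are $2n$ nondegenerate critical points, and all maxima lie above all minima.

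For the top factor, the defining property of $H_{2n}$ gives a homeomorphism $\Phi$ of $(B^+,\tau_n)$ extending the boundary action of $h$. I would choose the Hilden generators explicitly so that $\Phi$ can be built from the standard generating set. One standard choice is $\sigma_1$ (twisting one cap), $\sigma_2\sigma_1\sigma_3\sigma_2$ (exchanging adjacent caps), and $\sigma_2\sigma_1^2\sigma_2$ (passing one cap around another). For each of these I would write down an explicit isotopy of $B^+$, relative to a collar of the boundary, that moves the caps through positions in which each cap still has a single maximum. Concretely:
\begin{itemize}
\item for $\sigma_1$, rotate the cap by $\pi$ about a vertical axis;
\item for the exchange generator, translate two caps past each other at different heights while staying above the collar;
\item for the third generator, slide one small cap around another, keeping it above the level of the other cap's feet.
\end{itemize}
Composing such an isotopy with the braid region converts $\pl(h\beta)$ into $\pl(\beta)$. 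Throughout, the maxima stay inside $B^+$ and the braid region remains critical-point free, so every intermediate link is an $n$--bridge position. This gives the required bridge isotopy. The bottom factor $\beta h$ is handled symmetrically using $B^-$ and minima.

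The main obstacle is the last point. The paper defines $H_{2n}$ abstractly, via extension to a homeomorphism of $(B^3,\tau_n)$. An abstract homeomorphism does not come with a level-preserving isotopy, and a general isotopy of $(B^+,\tau_n)$ could create extra critical points along the way. I would handle this by invoking Hilden's finite generating set for $H_{2n}$, which justifies reducing to the three explicit moves above. Alternatively, one can argue that each $\Phi$ is isotopic through homeomorphisms of the pair to one realized by motions of caps kept "high", in the sense of the standard cap system. This is essentially the easy direction of Hovland's correspondence, so I would cite \cite{Hovland} for the generator-level verification if full detail is not wanted.
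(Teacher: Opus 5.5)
Your proof is correct up to two small slips noted at the end, and it takes a more careful route than the paper's. The paper handles an arbitrary $h\in H_{2n}$ in one step. It uses only the defining extension property: multiplying by $h$ on the left or right merely changes how the endpoints are identified with the upper or lower trivial tangle. It then asserts that this change is realized by an ambient isotopy inside the corresponding ball, so that the bridge position is preserved, and cites Birman and Hovland. It never checks that no new critical points appear.

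You isolate exactly that unverified point. The extension property by itself only gives some isotopy of tangles rel boundary, with no control over critical points. You close the gap by invoking Hilden's generating theorem and exhibiting, for each generator, an explicit cap motion in which every arc keeps a single nondegenerate maximum. Hence every intermediate link is an $n$--bridge position. The paper's version is shorter and generator-free; yours actually verifies the bridge condition at every time, at the price of importing Hilden's theorem.

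The two small corrections:
\begin{itemize}
\item As written, your three braids lie in $\B_4$ and so cannot generate $H_{2n}$ for $n\ge 3$. Hilden's generating set is $\sigma_1$, $\sigma_2\sigma_1^2\sigma_2$, and $\sigma_{2i}\sigma_{2i-1}\sigma_{2i+1}\sigma_{2i}$ for all $1\le i\le n-1$. The additional exchange moves are handled exactly like the first one.
\item The isotopy should be taken rel the boundary of the enlarged ball consisting of $B^+$ together with the slab in which the braid factor $h$ sits. It should not be taken rel a collar of $\partial B^+$ itself: $h$ acts nontrivially on the punctured sphere $\partial B^+$, so no isotopy fixed near $\partial B^+$ can absorb it.
\end{itemize}
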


\begin{proof}
Suppose that $\beta'=h_1\beta h_2$ with $h_1,h_2\in H_{2n}$. By definition of the Hilden subgroup, each $h_i$ extends to a homeomorphism of the standard trivial $n$--string tangle $(B^3,\tau_n)$. In a $2n$--plat presentation, right multiplication by $h_2$ changes only the identification of the lower endpoints with the lower trivial tangle, while left multiplication by $h_1$ changes only the corresponding identification at the top. Since both changes are realized by ambient isotopies of the upper and lower trivial tangles inside their respective $3$--balls, they do not change the resulting bridge position up to bridge isotopy. Hence $\pl(\beta)$ and $\pl(\beta')$ determine bridge-isotopic $n$--bridge positions. Compare Birman's stable equivalence theorem for plats \cite{BirmanStable} and Hovland's fixed-level formulation \cite{Hovland}.
\end{proof}

\begin{theorem}[Hovland \cite{Hovland}, cf.~Birman \cite{BirmanStable}]\label{thm:plat-bridge-bijection}
For each integer $n\ge 1$, the set $\mathcal P_{2n}$ is in natural bijection with the set of bridge isotopy classes of $n$--bridge positions in $S^3$.
\end{theorem}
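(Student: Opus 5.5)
The plan is to build the map $\Phi\colon \mathcal P_{2n}\to\{\text{bridge isotopy classes of $n$--bridge positions}\}$, $[\beta]_H\mapsto[\pl(\beta)]$, and to check that it is well defined, surjective, and injective.

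\emph{Well-definedness.} This is exactly Proposition~\ref{prop:same-coset-same-plat}: if $[\beta]_H=[\beta']_H$, then $\pl(\beta)$ and $\pl(\beta')$ are bridge isotopic.

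\emph{Surjectivity.} Let $L$ be an $n$--bridge position. Choose a level $t$ lying strictly between every minimum value and every maximum value of $h|_L$. Then $h^{-1}(t)$ is a $2$--sphere meeting $L$ in $2n$ points. Above it, $L$ consists of $n$ arcs, each containing a single maximum. Below it, $L$ consists of $n$ arcs, each containing a single minimum. Each such arc is boundary parallel, so we can flatten the maxima into a thin collar near the top and the minima into a thin collar near the bottom. We do this by an isotopy that only moves points monotonically within the level sets, and therefore stays inside bridge positions at every time. The part of $L$ between the two collars is then a braid $\beta\in\B_{2n}$, once we identify the collars with the standard caps and cups. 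Hence $L$ is bridge isotopic to $\pl(\beta)$.

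\emph{Injectivity.} This is the main obstacle. Suppose $\{H_t\}$ is a bridge isotopy from $\pl(\beta)$ to $\pl(\beta')$. First use a general-position argument to arrange that at every time $t$ there is a level separating the maxima from the minima, and that this level can be chosen continuously in $t$ by a compactness argument. This produces a one-parameter family of bridge spheres. Next apply the flattening of the previous step parametrically. The family then becomes a loop in the configuration space of $2n$ points in the middle level, which is a braid. At the top and bottom it is decorated by a path of trivial tangles in the upper and lower balls.

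A path of trivial $n$--string tangles that starts and ends at the standard one $\tau_n$ traces out a mapping class of $(B^3,\tau_n)$. Restricting to the boundary sphere, this mapping class gives an element of the Hilden subgroup $H_{2n}$. Tracking the middle braid across the family then gives $\beta'=h_1\beta h_2$, where $h_1$ comes from the upper ball and $h_2$ from the lower ball.

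The delicate points are twofold. First, one must check that the boundary restriction really lands in $H_{2n}$, and not merely in the mapping class group of the $2n$--punctured sphere. This uses that the space of embeddings of the trivial tangle deformation retracts appropriately, i.e.\ Hilden's characterization of $H_{2n}$ as the stabilizer of the standard cap system. Second, one must handle the full twist and the point at infinity when passing from spherical braids to $\B_{2n}$. For both of these I would follow Hovland's argument \cite{Hovland}, with Birman's stable version \cite{BirmanStable} as a guide, rather than reprove them.
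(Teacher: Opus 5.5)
Your proposal is correct and follows essentially the same route as the paper. Both use Proposition~\ref{prop:same-coset-same-plat} for well-definedness, and both get surjectivity by cutting along a level sphere separating the maxima from the minima. For injectivity, both choose a continuously varying separating level along the bridge isotopy and read off the changes of identification of the upper and lower trivial tangles as left and right multiplication by Hilden elements; the paper gets the continuous level directly from continuity of the critical values, so your general-position step is unnecessary. The paper likewise does not prove the subtleties you flag, namely that these boundary changes give genuine elements of $H_{2n}\le \B_{2n}$ and the passage from spherical to planar braids, and instead points to Hovland and Birman.
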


\begin{proof}
Given $\beta\in \B_{2n}$, the plat closure $\pl(\beta)$ is an $n$--bridge position, and Proposition~\ref{prop:same-coset-same-plat} shows that its bridge isotopy class depends only on the double coset $[\beta]_H$. Thus there is a well-defined map
\[
\Phi_n\colon \mathcal P_{2n}\longrightarrow
\{\text{bridge isotopy classes of $n$--bridge positions}\}.
\]

The map $\Phi_n$ is surjective. Indeed, let $L$ be an $n$--bridge position. Choose a bridge sphere $P$ separating the maxima of $h|_L$ from the minima. Then each of the tangles cut off by $P$ is a trivial $n$--string tangle. After identifying the two $3$--balls bounded by $P$ with the standard trivial tangles, the link $L$ is represented by a $2n$--plat closure.

To prove injectivity, suppose that $\pl(\beta_0)$ and $\pl(\beta_1)$ are bridge isotopic $n$--bridge positions. Let $\{L_t\}_{t\in[0,1]}$ be a bridge isotopy from $L_0=\pl(\beta_0)$ to $L_1=\pl(\beta_1)$. Since the number of local maxima and local minima is constant along the isotopy and all critical points remain non-degenerate, the critical values of $h|_{L_t}$ vary continuously and remain separated into an upper collection and a lower collection. Hence one may choose a regular value $v_t$ of $h|_{L_t}$ depending continuously on $t$, with all maxima above $v_t$ and all minima below $v_t$. The level sphere
\[
P_t:=h^{-1}(v_t)
\]
then varies continuously and is a bridge sphere for $L_t$ for every $t$. Straightening the upper and lower trivial tangles determined by $P_t$ to the standard cap systems produces, for each $t$, a $2n$--plat representative of $L_t$. Tracking the endpoints on $P_t$ during the isotopy changes only the identifications of the upper and lower trivial tangles with the standard one. These changes are realized by homeomorphisms of the standard trivial $n$--string tangle, hence by left and right multiplication by elements of the Hilden subgroup. Therefore the initial and final braids satisfy
\[
\beta_1\in H_{2n}\beta_0H_{2n},
\]
so $[\beta_0]_H=[\beta_1]_H$.

This gives injectivity of $\Phi_n$, and hence the claimed bijection. The argument is the fixed-level version of Birman's equivalence theorem for plat presentations \cite{BirmanStable}; compare also Hovland's explicit formulation at fixed bridge level \cite{Hovland}, currently available as an arXiv preprint.
\end{proof}

\section{Ordering plat presentation classes at a fixed level}

Fix an integer $n\ge 1$. Let $<_D$ denote the Dehornoy order on $\B_{2n}$; see Dehornoy \cite{Dehornoy} and Fenn--Greene--Rolfsen--Rourke--Wiest \cite{FGRRW}.

\begin{definition}
A function
\[
c_n\colon \B_{2n}\to \N
\]
is called a \emph{proper complexity function at level $n$} if, for every $N\in\N$, the set
\[
\{\beta\in \B_{2n}\mid c_n(\beta)\le N\}
\]
is finite.
\end{definition}

\begin{remark}
Typical examples include the Artin word length and the Garside length on $\B_{2n}$.
\end{remark}

Fix such a proper complexity function $c_n$.

\begin{definition}
For $C\in \mathcal P_{2n}$, define
\[
c_n(C):=\min\{c_n(\beta)\mid \beta\in C\},
\quad
M_n(C):=\{\beta\in C\mid c_n(\beta)=c_n(C)\}.
\]
Since $c_n$ is proper, $M_n(C)$ is finite and nonempty. We define the \emph{Dehornoy canonical representative} of $C$ by
\[
r_{D,n}(C):=\minD\, M_n(C).
\]
\end{definition}

\begin{definition}
For $C_1,C_2\in \mathcal P_{2n}$, define
\[
C_1\prec_{D,n} C_2
\quad\Longleftrightarrow\quad
r_{D,n}(C_1)<_D r_{D,n}(C_2).
\]
We call $\prec_{D,n}$ the \emph{Dehornoy-induced order on $2n$--plat presentation classes}.
\end{definition}

\begin{proposition}\label{prop:plat-order}
The relation $\prec_{D,n}$ is a well-defined strict total order on $\mathcal P_{2n}$.
\end{proposition}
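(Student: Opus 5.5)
The plan is to pull back the strict total order $<_D$ on $\B_{2n}$ along the map $r_{D,n}\colon \mathcal P_{2n}\to \B_{2n}$. Once we know that $r_{D,n}$ is a well-defined and injective map, the order properties follow formally. The argument therefore splits into three steps: well-definedness of $r_{D,n}$, injectivity of $r_{D,n}$, and transfer of the order axioms.

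First, well-definedness. Fix $C\in\mathcal P_{2n}$. The set $\{c_n(\beta)\mid \beta\in C\}$ is a nonempty subset of $\N$, because $C$ is nonempty. By well-ordering it has a least element, so $c_n(C)$ exists. The set $M_n(C)$ is then nonempty. It is contained in $\{\beta\in\B_{2n}\mid c_n(\beta)\le c_n(C)\}$, which is finite by properness of $c_n$.

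We use that $<_D$ is a strict total order on $\B_{2n}$; this is Dehornoy's theorem, cited from \cite{Dehornoy,FGRRW}. A finite nonempty totally ordered set has a unique least element, so $r_{D,n}(C)=\minD M_n(C)$ exists and is unique. Hence $r_{D,n}$ is a genuine function, and the relation $\prec_{D,n}$ depends only on $C_1$ and $C_2$, not on any choice.

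Second, injectivity. By construction $r_{D,n}(C)\in M_n(C)\subset C$. Distinct double cosets are disjoint, since they form a partition of $\B_{2n}$. So if $C_1\neq C_2$, then $r_{D,n}(C_1)\ne r_{D,n}(C_2)$.

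Third, transfer of the order axioms.
\begin{itemize}
\item Irreflexivity of $\prec_{D,n}$ is immediate from irreflexivity of $<_D$.
\item Transitivity follows directly: $r_{D,n}(C_1)<_D r_{D,n}(C_2)<_D r_{D,n}(C_3)$ implies $r_{D,n}(C_1)<_D r_{D,n}(C_3)$.
\item For totality (trichotomy), take $C_1\ne C_2$. Their representatives are distinct by injectivity, so totality of $<_D$ gives exactly one of $C_1\prec_{D,n}C_2$ or $C_2\prec_{D,n}C_1$.
\end{itemize}

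The only step with real content is the existence of $\minD M_n(C)$. Since $<_D$ is not a well-order on $\B_{2n}$, the least element would not exist on an infinite set. This is exactly why the argument has to use the finiteness of $M_n(C)$ coming from properness of $c_n$, rather than working with the whole double coset $C$. The remaining steps are routine.
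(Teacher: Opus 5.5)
Your proposal is correct and follows the same route as the paper. Properness of $c_n$ and the well-ordering of $\N$ make $M_n(C)$ finite and nonempty, so it has a unique $<_D$--least element; $r_{D,n}$ is injective because Hilden double cosets partition $\B_{2n}$; and trichotomy and transitivity are then inherited from $<_D$, with your explicit remark that finiteness is needed because $<_D$ is not a well-order on $\B_{2n}$ being a point the paper leaves implicit.
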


\begin{proof}
Let $C\in \mathcal P_{2n}$. Since $\{c_n(\beta)\mid \beta\in C\}$ is a nonempty subset of $\N$, the minimum $c_n(C)$ exists. Thus $M_n(C)$ is nonempty. Moreover,
\[
M_n(C)\subset \{\beta\in \B_{2n}\mid c_n(\beta)\le c_n(C)\},
\]
and the latter set is finite because $c_n$ is proper. Hence $M_n(C)$ is finite. Since $<_D$ is a total order on $\B_{2n}$, the finite nonempty set $M_n(C)$ has a unique $<_D$--least element, namely $r_{D,n}(C)$.

If $r_{D,n}(C_1)=r_{D,n}(C_2)$, then this braid belongs to both $C_1$ and $C_2$. Since Hilden double cosets partition $\B_{2n}$, we obtain $C_1=C_2$. Therefore exactly one of
\[
C_1\prec_{D,n}C_2,\qquad C_1=C_2,\qquad C_2\prec_{D,n}C_1
\]
holds. Transitivity follows immediately from the transitivity of $<_D$.
\end{proof}

\section{Fixed-level plat presentation classes of a link type}

Let $\mathcal L$ be a link type in $S^3$, and let $b(\mathcal L)$ denote its bridge number.

\begin{definition}
For each integer $n\ge b(\mathcal L)$, define
\[
\mathcal P^{(n)}(\mathcal L):=
\{C\in \mathcal P_{2n}\mid \pl(\beta)\in \mathcal L \text{ for some } \beta\in C\}.
\]
For $n<b(\mathcal L)$, we set $\mathcal P^{(n)}(\mathcal L):=\varnothing$.
\end{definition}

By Proposition~\ref{prop:same-coset-same-plat}, this definition is well defined: if
$C\in \mathcal P_{2n}$ and $\pl(\beta)\in\mathcal L$ for some $\beta\in C$, then
$\pl(\beta')\in\mathcal L$ for every $\beta'\in C$.
By Theorem~\ref{thm:plat-bridge-bijection}, the set $\mathcal P^{(n)}(\mathcal L)$ is naturally identified with the set of bridge isotopy classes of $n$--bridge positions of the link type $\mathcal L$.

\begin{definition}
For $C_1,C_2\in \mathcal P^{(n)}(\mathcal L)$, define
\[
C_1\prec_{D,\mathcal L}^{(n)} C_2
\quad\Longleftrightarrow\quad
C_1\prec_{D,n} C_2.
\]
\end{definition}

\begin{proposition}\label{prop:fixed-level-order}
If $n\ge b(\mathcal L)$, then $\mathcal P^{(n)}(\mathcal L)$ is nonempty, and $\prec_{D,\mathcal L}^{(n)}$ is a strict total order on $\mathcal P^{(n)}(\mathcal L)$.
\end{proposition}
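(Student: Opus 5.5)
The proof has two parts: nonemptiness of $\mathcal P^{(n)}(\mathcal L)$ for $n\ge b(\mathcal L)$, and the order-theoretic claim. The second part is immediate. By definition, $\prec_{D,\mathcal L}^{(n)}$ is the restriction of $\prec_{D,n}$ to the subset $\mathcal P^{(n)}(\mathcal L)\subset\mathcal P_{2n}$. By Proposition~\ref{prop:plat-order}, $\prec_{D,n}$ is a strict total order on $\mathcal P_{2n}$, and irreflexivity, transitivity and trichotomy all pass to any subset. So nothing beyond citing Proposition~\ref{prop:plat-order} is needed there.

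For nonemptiness, I first produce an $n$--bridge position of $\mathcal L$. By definition of the bridge number, $\mathcal L$ admits a $b$--bridge position $L$ with $b=b(\mathcal L)$. If $n>b$, I raise the level by $n-b$ elementary stabilizations. Each one inserts, in a small neighbourhood of a point on a descending arc of $L$, a small monotone kink: a new local maximum and local minimum, both non-degenerate. The new maximum is placed above all existing minima and the new minimum below all existing maxima. This is possible because the kink can be pushed along the arc, or its tip stretched vertically, so that it straddles a level separating maxima from minima. The result is still in $\mathcal L$, since it is isotopic to $L$ by a small isotopy, and it has $2(b+1)$ non-degenerate critical points with every maximum above every minimum. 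Iterating gives an $n$--bridge position $L'\in\mathcal L$.

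Next I apply the surjectivity half of Theorem~\ref{thm:plat-bridge-bijection} to $L'$. This gives $\beta\in\B_{2n}$ such that $\pl(\beta)$ is bridge isotopic to $L'$. In particular $\pl(\beta)\in\mathcal L$, so $[\beta]_H\in\mathcal P^{(n)}(\mathcal L)$, and this set is nonempty.

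The main obstacle is the stabilization step. It requires checking that the added kink can be arranged to respect the separation condition "every maximum above every minimum." I would handle it concretely by choosing a regular level $v$ separating maxima from minima, taking a point of $L$ on the level $v$, and introducing there a small zigzag whose new maximum lies just above $v$ and whose new minimum lies just below $v$. Equivalently, on the plat side, one can pass from a $2b$--plat $\beta$ of $\mathcal L$ to the $2(b+1)$--plat obtained by adding a trivially capped pair of strands adjacent to one of the caps. This standard plat stabilization yields the same link type, and with that formulation the claim reduces to a routine picture.
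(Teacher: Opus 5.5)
Your proof is correct and follows the paper's argument: $\mathcal P^{(n)}(\mathcal L)$ is nonempty because $\mathcal L$ has an $n$--bridge position that can be put into plat form, and the order claim holds because $\prec_{D,\mathcal L}^{(n)}$ is the restriction of $\prec_{D,n}$ from Proposition~\ref{prop:plat-order}; you also make explicit, via the zigzag at a separating level $v$, the stabilization step that the paper takes for granted. One small caution about your ``equivalent'' plat-side stabilization: the new pair of strands must be joined to an existing strand by a crossing such as $\sigma_{2b}$, since a pair that is only capped at top and bottom adds a split unknotted component and changes the link type.
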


\begin{proof}
Since $n\ge b(\mathcal L)$, the link type $\mathcal L$ admits an $n$--bridge position. By isotoping that bridge position into plat form, we obtain an element of $\mathcal P_{2n}$ representing $\mathcal L$. Hence $\mathcal P^{(n)}(\mathcal L)$ is nonempty. The relation $\prec_{D,\mathcal L}^{(n)}$ is, by definition, the restriction of $\prec_{D,n}$ to the subset $\mathcal P^{(n)}(\mathcal L)\subset \mathcal P_{2n}$. Therefore irreflexivity, transitivity, and totality are inherited from $\prec_{D,n}$.
\end{proof}

\section{A distinguished fixed-level plat presentation class}

Fix $n\ge b(\mathcal L)$.

\begin{definition}
Let
\[
\mathcal B^{(n)}(\mathcal L):=\{\beta\in \B_{2n}\mid \pl(\beta)\in \mathcal L\}.
\]
Since $n\ge b(\mathcal L)$, the set $\mathcal B^{(n)}(\mathcal L)$ is nonempty. Define
\[
c_{\min}^{(n)}(\mathcal L):=\min\{c_n(\beta)\mid \beta\in \mathcal B^{(n)}(\mathcal L)\},
\]
\[
M_{\mathcal L}^{(n)}:=\{\beta\in \mathcal B^{(n)}(\mathcal L)\mid c_n(\beta)=c_{\min}^{(n)}(\mathcal L)\}.
\]
Since $c_n$ is proper, the set $M_{\mathcal L}^{(n)}$ is finite and nonempty. Define
\[
\beta_{D,\mathcal L}^{(n)}:=\minD\, M_{\mathcal L}^{(n)}
\]
and
\[
\CanPlat_D^{(n)}(\mathcal L):=[\beta_{D,\mathcal L}^{(n)}]_H\in \mathcal P_{2n}.
\]
We call $\CanPlat_D^{(n)}(\mathcal L)$ the \emph{distinguished fixed-level plat presentation class} of $\mathcal L$.
\end{definition}

Thus there are two canonical constructions at fixed bridge level $n$:
the globally chosen braid $\beta_{D,\mathcal L}^{(n)}$, defined by minimizing first the complexity and then the Dehornoy order among all braids representing $\mathcal L$, and the cosetwise canonical representative $r_{D,n}(C)$ attached to an individual Hilden double coset $C\in\mathcal P_{2n}$.
The next two propositions show that the distinguished class is well defined and that these two canonical constructions are compatible.

\begin{proposition}\label{prop:canplat-fixed}
The class $\CanPlat_D^{(n)}(\mathcal L)$ is well defined and belongs to $\mathcal P^{(n)}(\mathcal L)$.
\end{proposition}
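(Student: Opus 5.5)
The plan is to check, in order, that each ingredient in the definition of $\CanPlat_D^{(n)}(\mathcal L)$ exists and is unique, and then that its double coset lies in $\mathcal P^{(n)}(\mathcal L)$. The argument is parallel to the proof of Proposition~\ref{prop:plat-order}, with the double coset $C$ replaced by the set $\mathcal B^{(n)}(\mathcal L)$.

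The first step is nonemptiness of $\mathcal B^{(n)}(\mathcal L)$. Since $n\ge b(\mathcal L)$, Proposition~\ref{prop:fixed-level-order} tells us that $\mathcal P^{(n)}(\mathcal L)$ is nonempty. Hence there is a class $C$ and some $\beta\in C$ with $\pl(\beta)\in\mathcal L$, and this $\beta$ lies in $\mathcal B^{(n)}(\mathcal L)$.

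Next, the set $\{c_n(\beta)\mid \beta\in\mathcal B^{(n)}(\mathcal L)\}$ is a nonempty subset of $\N$. By well-ordering, the minimum $c_{\min}^{(n)}(\mathcal L)$ exists, so $M_{\mathcal L}^{(n)}$ is nonempty. Moreover,
\[
M_{\mathcal L}^{(n)}\subset\{\beta\in\B_{2n}\mid c_n(\beta)\le c_{\min}^{(n)}(\mathcal L)\},
\]
and the right-hand side is finite because $c_n$ is proper. So $M_{\mathcal L}^{(n)}$ is finite and nonempty. Since $<_D$ is a strict total order on $\B_{2n}$, the set $M_{\mathcal L}^{(n)}$ has a unique $<_D$--least element $\beta_{D,\mathcal L}^{(n)}$. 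The class $[\beta_{D,\mathcal L}^{(n)}]_H$ is then a uniquely determined element of $\mathcal P_{2n}$, which gives well-definedness. Note that the only choices are the fixed data $n$, $c_n$ and $<_D$, so the construction involves no arbitrary choices.

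Finally, $\beta_{D,\mathcal L}^{(n)}$ belongs to $\mathcal B^{(n)}(\mathcal L)$, so $\pl(\beta_{D,\mathcal L}^{(n)})\in\mathcal L$. Thus the coset $C=[\beta_{D,\mathcal L}^{(n)}]_H$ contains an element whose plat closure lies in $\mathcal L$, which by definition means $C\in\mathcal P^{(n)}(\mathcal L)$. There is no real obstacle here. The only point needing care is the nonemptiness in the first step, and that follows from the bridge number hypothesis through Proposition~\ref{prop:fixed-level-order}, which rests on Theorem~\ref{thm:plat-bridge-bijection}.
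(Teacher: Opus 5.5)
Your proof is correct and follows essentially the same route as the paper's. Nonemptiness of $\mathcal B^{(n)}(\mathcal L)$ comes from $n\ge b(\mathcal L)$, properness of $c_n$ makes $M_{\mathcal L}^{(n)}$ finite and nonempty, totality of $<_D$ gives a unique least element, and membership in $\mathcal P^{(n)}(\mathcal L)$ follows from $\pl(\beta_{D,\mathcal L}^{(n)})\in\mathcal L$; the only differences are that you get nonemptiness via Proposition~\ref{prop:fixed-level-order} rather than directly, and you spell out the well-ordering and finiteness steps that the paper leaves implicit.
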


\begin{proof}
Since $n\ge b(\mathcal L)$, there exists a braid $\beta\in \B_{2n}$ such that
$\pl(\beta)\in \mathcal L$. Hence the set $\mathcal B^{(n)}(\mathcal L)$ is nonempty.
By the properness of $c_n$, the subset
$M_{\mathcal L}^{(n)}$ of braids having minimal $c_n$--complexity is finite and nonempty.
Since $<_D$ is a strict total order on $\B_{2n}$,
the set $M_{\mathcal L}^{(n)}$ has a unique $<_D$--least element, denoted
$\beta_{D,\mathcal L}^{(n)}$.

Therefore the Hilden double coset
\[
\CanPlat_D^{(n)}(\mathcal L):=[\beta_{D,\mathcal L}^{(n)}]_H
\]
is well defined. Since $\pl(\beta_{D,\mathcal L}^{(n)})\in \mathcal L$, this class
belongs to $\mathcal P^{(n)}(\mathcal L)$.
\end{proof}

\begin{proposition}[Compatibility of global and cosetwise canonical representatives]
\label{prop:global-vs-cosetwise}
We have
\[
\beta_{D,\mathcal L}^{(n)}=r_{D,n}\bigl(\CanPlat_D^{(n)}(\mathcal L)\bigr).
\]
\end{proposition}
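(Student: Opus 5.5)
Write $C:=\CanPlat_D^{(n)}(\mathcal L)=[\beta_{D,\mathcal L}^{(n)}]_H$ and $\beta_*:=\beta_{D,\mathcal L}^{(n)}$. The plan is to show that the finite set $M_n(C)$ used to define $r_{D,n}(C)$ is contained in the finite set $M_{\mathcal L}^{(n)}$ and contains $\beta_*$. Once that is known, the $<_D$--least element of the larger set lies in the smaller set, so it is also the $<_D$--least element there.

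The first step is the inclusion $C\subset \mathcal B^{(n)}(\mathcal L)$. Since $\pl(\beta_*)\in\mathcal L$, Proposition~\ref{prop:same-coset-same-plat} gives $\pl(\beta')\in\mathcal L$ for every $\beta'\in C$. This is the same well-definedness remark used for $\mathcal P^{(n)}(\mathcal L)$; bridge isotopic positions are in particular ambient isotopic, so they have the same link type.

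The second step compares the complexities. From the inclusion,
\[
c_n(C)=\min_{\beta\in C}c_n(\beta)\ \ge\ \min_{\beta\in\mathcal B^{(n)}(\mathcal L)}c_n(\beta)=c_{\min}^{(n)}(\mathcal L).
\]
Conversely, $\beta_*\in C$ and $c_n(\beta_*)=c_{\min}^{(n)}(\mathcal L)$, so $c_n(C)\le c_{\min}^{(n)}(\mathcal L)$. Hence $c_n(C)=c_{\min}^{(n)}(\mathcal L)$. It follows that
\[
M_n(C)=C\cap M_{\mathcal L}^{(n)},
\]
and in particular $\beta_*\in M_n(C)\subset M_{\mathcal L}^{(n)}$.

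The final step is the order comparison. The braid $\beta_*$ is $<_D$--below or equal to every element of $M_{\mathcal L}^{(n)}$, hence to every element of the subset $M_n(C)$. Since $\beta_*$ itself lies in $M_n(C)$, it is $\minD M_n(C)=r_{D,n}(C)$, which is the claimed identity.

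No step is a genuine obstacle. The only point needing care is the first step, that the whole double coset consists of representatives of $\mathcal L$. That is exactly what Proposition~\ref{prop:same-coset-same-plat} supplies, so the argument reduces to elementary manipulation of minima over nested finite sets.
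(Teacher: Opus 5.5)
Your proof is correct and follows the paper's argument step for step. You show $c_n(C)=c_{\min}^{(n)}(\mathcal L)$ from $C\subset\mathcal B^{(n)}(\mathcal L)$ and $\beta_*\in C$, deduce $M_n(C)=M_{\mathcal L}^{(n)}\cap C$, and conclude that $\beta_*$ is the $<_D$--least element of this subset; your explicit appeal to Proposition~\ref{prop:same-coset-same-plat} for the inclusion supplies the justification the paper leaves implicit.
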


\begin{proof}
Let
\[
\beta^*:=\beta_{D,\mathcal L}^{(n)}=\minD\, M_{\mathcal L}^{(n)}
\]
and set
\[
C^*:=[\beta^*]_H=\CanPlat_D^{(n)}(\mathcal L).
\]
Since $\beta^*\in C^*$, we have
\[
c_n(C^*)\le c_n(\beta^*)=c_{\min}^{(n)}(\mathcal L).
\]
On the other hand, every braid in $C^*$ belongs to $\mathcal B^{(n)}(\mathcal L)$, so by definition of
$c_{\min}^{(n)}(\mathcal L)$ we have
\[
c_n(\beta)\ge c_{\min}^{(n)}(\mathcal L)=c_n(\beta^*)
\qquad\text{for every }\beta\in C^*.
\]
Hence
\[
c_n(C^*)=c_n(\beta^*)=c_{\min}^{(n)}(\mathcal L).
\]
Therefore
\[
M_n(C^*)=\{\beta\in C^*\mid c_n(\beta)=c_n(\beta^*)\}
      =M_{\mathcal L}^{(n)}\cap C^*.
\]
Since $\beta^*$ is the $<_D$--least element of $M_{\mathcal L}^{(n)}$, it is also the $<_D$--least element of the subset
$M_{\mathcal L}^{(n)}\cap C^*=M_n(C^*)$. Thus
\[
r_{D,n}(C^*)=\beta^*,
\]
which proves the claim.
\end{proof}

In particular, the globally chosen canonical braid and the cosetwise canonical representative determine the same distinguished plat presentation class.

\section{A fixed-level bridge finiteness conjecture}

We now formulate the conjecture in the bridge-position sense.

\begin{conjecture}[Fixed-level bridge finiteness conjecture, \cite{JKOT}]\label{conj:fixed-bridge-finiteness}
For each integer $n\ge 1$, every link type in $S^3$ admits at most finitely many bridge isotopy classes of $n$--bridge positions.
\end{conjecture}

\begin{remark}
By Theorem~\ref{thm:plat-bridge-bijection}, Conjecture~\ref{conj:fixed-bridge-finiteness} is equivalent to the assertion that $\mathcal P^{(n)}(\mathcal L)$ is finite for every link type $\mathcal L$ and every $n\ge 1$. In view of Remark~\ref{rem:position-vs-decomposition}, this formulation is deliberately about bridge positions up to bridge isotopy, not about bridge decompositions or bridge spheres.
\end{remark}

The Dehornoy-type construction translates this conjecture into a boundedness statement.

\begin{proposition}\label{prop:finiteness-reduction}
Fix an integer $n\ge 1$ and a link type $\mathcal L$. Then the following are equivalent:
\begin{enumerate}
    \item $\mathcal P^{(n)}(\mathcal L)$ is finite.
    \item There exists a constant $N=N(n,\mathcal L)$ such that
    \[
    c_n(r_{D,n}(C))\le N
    \qquad\text{for every }C\in \mathcal P^{(n)}(\mathcal L).
    \]
\end{enumerate}
\end{proposition}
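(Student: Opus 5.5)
Both implications follow from elementary properties of the map $C\mapsto r_{D,n}(C)$ together with the properness of $c_n$; no topology beyond the definitions in Sections~3 and~4 is needed.

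For (1)$\Rightarrow$(2), I will assume $\mathcal P^{(n)}(\mathcal L)$ is finite. Then the set of integers $\{c_n(r_{D,n}(C))\mid C\in\mathcal P^{(n)}(\mathcal L)\}$ is finite. Each $r_{D,n}(C)$ is well defined by Proposition~\ref{prop:plat-order}. I will take $N$ to be the maximum of this set, and $N=0$ if $\mathcal P^{(n)}(\mathcal L)$ is empty, which happens when $n<b(\mathcal L)$. This direction is immediate.

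For (2)$\Rightarrow$(1), I will consider the assignment $r\colon\mathcal P^{(n)}(\mathcal L)\to\B_{2n}$, $C\mapsto r_{D,n}(C)$. The first step is to show that $r$ is injective. By definition $r_{D,n}(C)\in M_n(C)\subset C$, and Hilden double cosets partition $\B_{2n}$. So $r_{D,n}(C_1)=r_{D,n}(C_2)$ forces $C_1=C_2$, exactly as in the proof of Proposition~\ref{prop:plat-order}. The second step uses the hypothesis (2): the image of $r$ is contained in
\[
\{\beta\in\B_{2n}\mid c_n(\beta)\le N\},
\]
and this set is finite because $c_n$ is proper. An injective map into a finite set has finite domain, so $\mathcal P^{(n)}(\mathcal L)$ is finite.

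There is no genuine obstacle. The only point requiring care is the injectivity of $r$, which rests on the fact that $r_{D,n}(C)$ actually lies in $C$ and that distinct double cosets are disjoint. I would also add the remark that, by the definition of $c_n(C)$ as a minimum, $c_n(r_{D,n}(C))=c_n(C)$. Hence (2) can be read equivalently as a uniform bound on the minimal complexities $c_n(C)$ over all classes $C\in\mathcal P^{(n)}(\mathcal L)$.
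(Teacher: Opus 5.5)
Your proof is correct and follows the paper's argument essentially verbatim. For (1)$\Rightarrow$(2) you take the maximum; for (2)$\Rightarrow$(1) you use that $C\mapsto r_{D,n}(C)$ is injective, since $r_{D,n}(C)\in C$ and double cosets are disjoint, with image in the finite set $\{\beta\in\B_{2n}\mid c_n(\beta)\le N\}$. Your explicit handling of the empty case $n<b(\mathcal L)$ covers a point the paper's proof leaves implicit, since it takes a maximum without excluding the empty set.
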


\begin{proof}
Suppose first that $\mathcal P^{(n)}(\mathcal L)$ is finite. Then the finite set
\[
\{c_n(r_{D,n}(C))\mid C\in \mathcal P^{(n)}(\mathcal L)\}
\]
has a maximum, which gives the required bound.

Conversely, assume that there exists $N$ such that $c_n(r_{D,n}(C))\le N$ for all $C\in \mathcal P^{(n)}(\mathcal L)$. Then
\[
\{r_{D,n}(C)\mid C\in \mathcal P^{(n)}(\mathcal L)\}
\subset
\{\beta\in \B_{2n}\mid c_n(\beta)\le N\}.
\]
The set on the right is finite because $c_n$ is proper. Since distinct classes have distinct Dehornoy canonical representatives, the set $\mathcal P^{(n)}(\mathcal L)$ must be finite.
\end{proof}

\section{Minimal level and examples}

The minimal-level theory is obtained by setting $n=b(\mathcal L)$.

\begin{definition}
For a knot or link type $\mathcal L$, we write
\[
\mathcal P^{\min}(\mathcal L):=\mathcal P^{(b(\mathcal L))}(\mathcal L)
\]
and
\[
\CanPlat_D^{\min}(\mathcal L):=\CanPlat_D^{(b(\mathcal L))}(\mathcal L).
\]
\end{definition}

\subsection{The unknot}

\begin{corollary}\label{cor:unknot-fixed}
For the unknot $U$ and every integer $n\ge 1$, the set $\mathcal P^{(n)}(U)$ consists of a single element. Consequently, $\CanPlat_D^{(n)}(U)$ is independent of the chosen proper complexity function $c_n$.
\end{corollary}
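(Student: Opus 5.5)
The plan is to reduce the corollary to a known geometric fact about the unknot and then use the bijection of Theorem~\ref{thm:plat-bridge-bijection}. The key input is Otal's theorem: every $n$--bridge position of the unknot is bridge isotopic to the standard one. Equivalently, every bridge sphere of the trivial knot is obtained from the standard $1$--bridge sphere by stabilization, and stabilizations at a fixed level are unique. I would cite Otal's work on bridge presentations of the trivial knot, or its reformulation in \cite{JKOT}, rather than reprove it.

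First, since $b(U)=1$, we have $n\ge b(U)$ for every $n\ge 1$. So Proposition~\ref{prop:fixed-level-order} gives that $\mathcal P^{(n)}(U)$ is nonempty. Next, suppose $C_1,C_2\in\mathcal P^{(n)}(U)$, and choose braids $\beta_i\in C_i$ with $\pl(\beta_i)\in U$. Each $\pl(\beta_i)$ is an $n$--bridge position of the unknot. By the uniqueness result, $\pl(\beta_1)$ and $\pl(\beta_2)$ are bridge isotopic. By the injectivity part of Theorem~\ref{thm:plat-bridge-bijection}, $[\beta_1]_H=[\beta_2]_H$, so $C_1=C_2$. Hence $\mathcal P^{(n)}(U)$ has exactly one element. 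Concretely, it is the class of the identity braid, whose plat closure is the standard $n$--bridge unknot.

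For the second assertion, Proposition~\ref{prop:canplat-fixed} places $\CanPlat_D^{(n)}(U)$ in $\mathcal P^{(n)}(U)$. Since that set is a singleton, the class equals its unique element, which was described without reference to $c_n$. So the class is independent of $c_n$. I would add a remark that the representative $\beta_{D,U}^{(n)}$ itself may still depend on $c_n$; only the double coset is independent.

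The main obstacle is the geometric uniqueness step. One must be careful that Otal's theorem concerns bridge spheres up to isotopy, while the statement here is about bridge positions up to bridge isotopy. The cleanest route is to observe that a bridge isotopy class of positions determines, and is determined by, the choice of level sphere up to the Hilden ambiguity, exactly as in the proof of Theorem~\ref{thm:plat-bridge-bijection}. Uniqueness of the bridge sphere up to isotopy in the complement of the fixed unknot then yields uniqueness of the position. If this translation proved delicate, I would instead appeal directly to the bridge-position version of the statement recorded in \cite{JKOT}.
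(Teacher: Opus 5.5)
Your argument is essentially the paper's. Otal's theorem gives uniqueness of the $n$--bridge position of the unknot up to bridge isotopy. Injectivity in Theorem~\ref{thm:plat-bridge-bijection} turns this into $|\mathcal P^{(n)}(U)|=1$. Then $\CanPlat_D^{(n)}(U)$, which lies in $\mathcal P^{(n)}(U)$ by Proposition~\ref{prop:canplat-fixed}, is forced to be that unique element. Your extra care about bridge spheres versus bridge positions goes beyond the paper, which simply cites Otal in bridge-position form.

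One side claim is wrong. For $n\ge 2$, the plat closure of the identity braid in $\B_{2n}$ is the $n$--component unlink, not the unknot: strands $2k-1$ and $2k$ close up into a separate circle. So the unique class is not $[1]_H$. A correct representative is, e.g., $\sigma_2\sigma_4\cdots\sigma_{2n-2}$, whose plat closure is the standard stabilized $n$--bridge unknot.

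This does not break the proof. Independence from $c_n$ only needs that $\mathcal P^{(n)}(U)$ is defined without reference to $c_n$. You should, however, drop the assertion that the unique element was described via the identity braid.
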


\begin{proof}
It is classical that every non-minimal bridge position of the unknot destabilizes to the standard $1$--bridge position; see Otal \cite{OtalUnknot}, and the survey discussion in \cite[Section~5.2.2]{OzawaSurvey}. Hence the unknot has a unique $n$--bridge position up to bridge isotopy for every $n\ge 1$. By Theorem~\ref{thm:plat-bridge-bijection}, this means that $\mathcal P^{(n)}(U)$ is a singleton. Therefore $\CanPlat_D^{(n)}(U)$ is the unique element of $\mathcal P^{(n)}(U)$, and in particular it is independent of the choice of the proper complexity function $c_n$.
\end{proof}

\subsection{Minimal-level consequences for rational knots}

The following statement is included only as an illustration of the minimal-level specialization. We do not claim a corresponding fixed-level classification here.

\begin{corollary}\label{cor:rational-min}
If $K$ is a rational knot, then $\mathcal P^{\min}(K)$ has at most two elements.
\end{corollary}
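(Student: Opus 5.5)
Since $b(K)=2$ for a rational (two-bridge) knot $K$, we have $\mathcal P^{\min}(K)=\mathcal P^{(2)}(K)\subset\mathcal P_4$. By Theorem~\ref{thm:plat-bridge-bijection}, $\mathcal P^{(2)}(K)$ is in bijection with the bridge isotopy classes of $2$--bridge positions of $K$. So it suffices to show that a rational knot has at most two $2$--bridge positions up to bridge isotopy. The argument will not use the ordering $\prec_{D,n}$ at all; it only uses the dictionary between Hilden double cosets and bridge positions.

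First I will pass from bridge positions to bridge spheres. Every $2$--bridge position $L$ determines a level $2$--bridge sphere $P=h^{-1}(v)$ separating maxima from minima, as in the proof of Theorem~\ref{thm:plat-bridge-bijection}. A bridge isotopy of the pair $(L,P)$ yields a bridge isotopy of positions. I will then invoke Schubert's classification of $2$--bridge knots together with the uniqueness theorem for $2$--bridge spheres of two-bridge knots (Schubert; Scharlemann--Tomova). In the form needed, this says that any two $2$--bridge spheres of $K$ are isotopic in the complement of $K$. Equivalently, any two $2$--bridge decompositions of $K$ are related by a diffeomorphism of $S^3$ taking one to the other, and this diffeomorphism may be taken isotopic to the identity, possibly after composing with a symmetry of $K$.

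Next I will translate back to positions. Given two $2$--bridge positions $L_0,L_1$ of $K$ with level spheres $P_0,P_1$, uniqueness gives an ambient isotopy carrying $(L_0,P_0)$ to $(L_1',P_1)$, where $L_1'$ differs from $L_1$ only by an element of the symmetry group of the pair $(K,P)$. Composing with the height function, the isotopy can be taken level-preserving near $P$ and arranged so that the intermediate links remain in bridge position. This uses the fact that trivial $2$--string tangles in each ball have a unique straightened position up to isotopy rel boundary, modulo the Hilden action. The remaining ambiguity is the choice of which side of $P$ is ``up''. Interchanging the two balls (the involution flipping the height function) may or may not be realized by a bridge isotopy. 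Hence the classes fall into at most two orbits, which gives $|\mathcal P^{\min}(K)|\le 2$. In double-coset terms, the two candidates are $[\beta]_H$ and $[\overline{\beta}]_H$, where $\overline{\beta}$ is the braid obtained by turning the plat upside down (reversing the word).

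The main obstacle is this last step: showing that the only ambiguity left after bridge-sphere uniqueness is the up/down flip. In particular, one must check that an isotopy of bridge spheres can be promoted to a bridge isotopy of positions, i.e.\ that the height function can be kept Morse with separated critical values throughout. I would try first to handle this via Hovland's double-coset description. Concretely, I would show that any element of $B_4$ carrying the standard $2$--bridge decomposition of $K$ to itself lies in $H_4$, or in $H_4\,\Delta\,H_4$ for the flip $\Delta$. If that fails, I would fall back on citing the survey \cite{OzawaSurvey} for the classification of minimal bridge positions of $2$--bridge knots.
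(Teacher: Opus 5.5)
Your reduction $\mathcal P^{\min}(K)=\mathcal P^{(2)}(K)$ via $b(K)=2$ is correct, and it makes the paper's appeal to Otal's stabilization theorem unnecessary. The paper then finishes in one line by citing Schubert's classification as giving at most two $2$--bridge presentations. Your route (uniqueness of the $2$--bridge sphere plus an up/down ambiguity) is viable. However, the step you call the main obstacle is the only content beyond the citations, and as written it is not established.

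The mechanism you propose for that step does not work as stated:
\begin{itemize}
\item An isotopy carrying $(L_0,P_0)$ to $(L_1,P_1)$ moves the sphere through non-level positions and gives no control of $h|_{L_t}$. So there is no reason it can be ``arranged so that the intermediate links remain in bridge position.''
\item Likewise, ``a bridge isotopy of the pair $(L,P)$ yields a bridge isotopy of positions'' conflates decompositions and positions, exactly as Remark~\ref{rem:position-vs-decomposition} warns against.
\item Your double-coset fallback is misformulated. Braids do not act on $(S^3,K)$, so ``elements of $B_4$ carrying the decomposition to itself'' is not the right object. Also, the flip is not a fixed braid $\Delta$ inserted into a double coset. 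It is the word reversal $\beta\mapsto\overline{\beta}$, which is an anti-automorphism, so the expected answer is $\{[\beta]_H,[\overline{\beta}]_H\}$.
\end{itemize}

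The repair is to discard the isotopy and keep only its endpoint. Sphere uniqueness gives an orientation-preserving diffeomorphism $G$ of $S^3$ with $G(L_0)=L_1$ and $G(P_0)=P_1$. Write $L_i=\pl(\beta_i)$ with a common level sphere $P$, the standard tangle below $P$, and $\beta_i$ applied to the standard tangle above $P$.
\begin{itemize}
\item If $G$ preserves the sides of $P$, then $G|_P$ extends over the lower standard tangle. After composing with homeomorphisms of the upper ball realizing $\beta_0$ and $\beta_1^{-1}$, it also extends over the upper standard tangle. Hence $\beta_1\in H_4\beta_0H_4$; here you use that braids acting trivially on the four-punctured sphere lie in $H_4$, which holds under the paper's boundary-action definition. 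Proposition~\ref{prop:same-coset-same-plat} then gives a bridge isotopy.
\item If $G$ swaps the sides, first apply the rotation by $\pi$ about a horizontal axis. It carries $\pl(\beta_0)$ to $\pl(\overline{\beta_0})$ and exchanges the two balls, so the same argument gives $\beta_1\in H_4\overline{\beta_0}H_4$.
\end{itemize}
This yields $\mathcal P^{(2)}(K)\subseteq\{[\beta_0]_H,[\overline{\beta_0}]_H\}$ without controlling any height function along the way. What your route buys over the paper's bare citation, once completed, is an explicit identification of the at most two classes as a plat and its upside-down flip.
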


\begin{proof}
Otal proved that every $n$--bridge presentation of a rational knot with $n\ge 3$ is obtained, up to bridge isotopy, by stabilization from a $2$--bridge presentation \cite{OtalRational}. Thus the minimal bridge positions of a rational knot are exactly its $2$--bridge positions. By Schubert's theorem, a rational knot admits at most two $2$--bridge presentations up to isotopy \cite{Schubert}. Hence $\mathcal P^{\min}(K)$ has at most two elements.
\end{proof}

\begin{remark}
For torus knots, the author proved that $n$--bridge decompositions are unique for every $n$ and that non-minimal bridge decompositions are stabilized \cite{OzawaTorus}. Since our present framework is formulated in terms of bridge positions up to bridge isotopy rather than bridge decompositions up to diffeomorphism, we do not record a torus-knot analogue of Corollary~\ref{cor:rational-min} here. Passing from uniqueness of bridge decompositions to uniqueness of bridge positions requires an additional comparison between these two notions; this is expected to be addressed in the forthcoming work \cite{JKOT}.
\end{remark}

\section{Further questions}

We conclude with several natural questions.

\begin{question}
For a fixed level $n$, to what extent does the class $\CanPlat_D^{(n)}(\mathcal L)$ depend on the choice of the proper complexity function $c_n$? In particular, can one identify natural classes of links for which it is independent of $c_n$?
\end{question}

\begin{question}
Can one characterize $\CanPlat_D^{(n)}(\mathcal L)$ geometrically, without first passing to braid representatives?
\end{question}

\begin{question}
How does $\CanPlat_D^{(n)}(\mathcal L)$ behave under stabilization from level $n$ to level $n+1$? Is there a natural compatibility between $\CanPlat_D^{(n)}(\mathcal L)$ and $\CanPlat_D^{(n+1)}(\mathcal L)$?
\end{question}

\begin{question}
Is there a natural family of links for which the boundedness condition in Proposition~\ref{prop:finiteness-reduction} can be verified directly from the order-theoretic framework?
\end{question}

\begin{question}
How should the fixed-level picture developed here be interpreted in the double branched cover? More precisely, can the order on $\mathcal P^{(n)}(\mathcal L)$ and the distinguished class $\CanPlat_D^{(n)}(\mathcal L)$ be reformulated in terms of genus $n-1$ hyperelliptic Heegaard splittings of the double branched cover of $\mathcal L$?
\end{question}

\end{document}